\newtheorem{theorem}{Theorem}[section]
\newtheorem{lemma}[theorem]{Lemma}
\newtheorem{corollary}[theorem]{Corollary}
\theoremstyle{definition}
\DeclareMathOperator{\Inf}{Inf}
\renewcommand{\theta}{\vartheta}
\newcommand{\Ind}{\big\uparrow}
\begin{document}

\thispagestyle{empty}

\title{Decompositions of some twisted Foulkes characters} 
\author{Melanie de Boeck and Rowena Paget}
\date{30th August 2014}
\address{ School of Mathematics, Statistics \& Actuarial Science  \\
University of Kent \\
Canterbury \\
Kent CT2 7NF, UK \\
}
\email{mjc53@kent.ac.uk}  \email{r.e.paget@kent.ac.uk}

\begin{abstract}
We decompose the twisted Foulkes characters $\phi^{(2^n)}_\nu$, or equi\-valently the plethysm $s_\nu \circ s_{(2)}$, in the cases where $\nu$ has either two rows or two columns, or is a hook partition.
\end{abstract}

\maketitle

\thispagestyle{empty}

\section{Introduction}
The twisted Foulkes characters are a generalisation of the characters that are at the heart of the long-standing Foulkes' Conjecture~\cite{Foulkes}.
Let $m$ and $n$ be positive integers and, as usual, denote the irreducible characters of the symmetric group $S_n$ by $\chi^\nu$, for   $\nu$   a partition of $n$. Starting with $\chi^\nu$, we let $\Inf \chi^\nu$ denote the character of the wreath product $S_m \wr S_n$ obtained by inflating $\chi^\nu$  using the canonical surjection $S_m \wr S_n \to S_n$. Let
\[ \phi^{(m^n)}_\nu = \left( \Inf \chi^\nu \right) \Ind_{S_m \wr S_n}^{S_{mn}};
\]
we call these {\it twisted Foulkes characters}.   Translating into the language of symmetric functions, determining  the irreducible constituents of $\phi^{(m^n)}_\nu$ is equivalent to expressing the plethysm $s_\nu \circ s_{(m)}$ as a sum of Schur functions $s_\lambda$. Such decompositions are unknown except in a few special cases. When $\nu$ is a partition of 2, 3 or 4, or when $m=2$ and $\nu$ has a single row or column then the answer is given by the work of Littlewood~\cite{Littlewood}, Thrall~\cite{Thrall}, Foulkes~\cite{Foulkes2} and Howe~\cite{Howe}.

In this note we extend the known results when $m=2$ to give formulas for 
the decomposition of $\phi^{(2^n)}_\nu$ when $\nu$ has either two rows or two columns, or is a hook partition. The short proofs use elementary techniques from the representation theory of symmetric groups.

\section{Preliminaries}

We first recall the well-known decompositions of the $S_{2n}$-characters $\phi^{(2^n)}_{(n)}$ and $\phi^{(2^n)}_{(1^n)}$. For any partition $\alpha=(\alpha_1,\alpha_2, \ldots, \alpha_k)$ of $n$, define $2\alpha$ to be the partition of $2n$ obtained by doubling the length of each part of $\alpha$. Note that every partition of $2n$ which has all parts even can be written as $2\alpha$ for a unique $\alpha$. The following decomposition of $\phi^{(2^n)}_{(n)}$ goes back to Thrall~\cite{Thrall}:
\begin{equation}\label{eqn:F2}
 \phi^{(2^n)}_{(n)}= \sum \chi^{2\alpha}, 
\end{equation}
where the sum is over all partitions $\alpha$ of $n$.
If the partition $\alpha$ of $n$ has all its parts distinct then we may define the partition $2[\alpha]$ of $2n$ to be the partition whose leading diagonal hook lengths are $2\alpha_1, 2\alpha_2, \ldots, 2\alpha_k$ and whose $i$-th part is $\alpha_i+i$ for $i=1, \ldots, k$.  For example, $2[(5,2,1)]=(6,4^2,1^2)$.  The following decomposition of $\phi^{(2^n)}_{(1^n)}$ is also well-known; see for example~\cite[I.8, Ex. 6(d)]{Macdonald}:
\begin{equation} \label{eqn:Fsign2}\phi^{(2^n)}_{(1^n)}= \sum \chi^{2[\alpha]}, \end{equation}
where the sum is over all partitions $\alpha$ of $n$ with distinct parts.
\smallskip

In our work, we will repeatedly make use of the following lemma. The Littlewood--Richardson coefficients are denoted by  $c^\lambda_{\sigma, \tau}$ as usual for partitions $\sigma$, $\tau$ and $\lambda$. 

\begin{lemma} \label{lem:ind}
Let $\nu$ be a partition of $n-r$ and $\mu$ be a partition of $r$. Then
\[ \left( \phi_\nu^{(m^{n-r})} \times \phi_\mu^{(m^r)} \right) \Big\uparrow_{S_{m(n-r)}\times S_{mr}}^{S_{mn}} = \sum c^\lambda_{\nu, \mu} \phi^{(m^n)}_\lambda, \]
where the sum is over all partitions $\lambda$ of $mn$.
\end{lemma}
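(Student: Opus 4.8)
The plan is to reduce the identity to a single induction computation carried out inside the wreath product $S_m \wr S_n$, where the interplay between inflation and induction is transparent. Write $G := \left( S_m \wr S_{n-r} \right) \times \left( S_m \wr S_r \right)$ and let $B := S_m^n$ be the common base group of $G$ and of $S_m \wr S_n$; then $B$ is normal in both, the quotient $G/B$ is the Young subgroup $S_{n-r} \times S_r$ of $(S_m \wr S_n)/B \cong S_n$, and we have the two chains of subgroups $G \le S_{m(n-r)} \times S_{mr} \le S_{mn}$ and $G \le S_m \wr S_n \le S_{mn}$. Everything rests on moving the induction around these chains.

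First I would use that induction commutes with direct products (the product formula for induction), together with transitivity of induction, to rewrite the left-hand side as
\[ \left( \phi_\nu^{(m^{n-r})} \times \phi_\mu^{(m^r)} \right) \Ind_{S_{m(n-r)}\times S_{mr}}^{S_{mn}} = \left( (\Inf \chi^\nu) \times (\Inf \chi^\mu) \right)\Ind_{G}^{S_{mn}}, \]
and then factor this induction through $S_m \wr S_n$:
\[ \left( (\Inf \chi^\nu) \times (\Inf \chi^\mu) \right)\Ind_{G}^{S_{mn}} = \left( \left( (\Inf \chi^\nu) \times (\Inf \chi^\mu) \right) \Ind_{G}^{S_m \wr S_n} \right) \Ind_{S_m \wr S_n}^{S_{mn}}. \]
Since $(\Inf \chi^\lambda)\Ind_{S_m \wr S_n}^{S_{mn}} = \phi^{(m^n)}_\lambda$ by definition and induction is additive, it then suffices to identify the inner induction $\left( (\Inf \chi^\nu) \times (\Inf \chi^\mu) \right) \Ind_{G}^{S_m \wr S_n}$ with $\sum_\lambda c^\lambda_{\nu,\mu}\,\Inf \chi^\lambda$.

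For the inner induction, observe that $(\Inf \chi^\nu) \times (\Inf \chi^\mu)$ is trivial on $B$, so it is the inflation to $G$ of the character $\chi^\nu \times \chi^\mu$ of $S_{n-r}\times S_r$. I would then invoke (or check directly from the induced-character formula) the general fact that, whenever $B \trianglelefteq K$ with quotient $\overline K$ and $B \le G \le K$ has image $\overline G \le \overline K$, one has $\left( \Inf_{\overline G}^{G} \psi \right) \Ind_G^K = \Inf_{\overline K}^{K}\left( \psi \Ind_{\overline G}^{\overline K} \right)$ for every character $\psi$ of $\overline G$. Applying this with $K = S_m \wr S_n$, $\overline K = S_n$, $\overline G = S_{n-r} \times S_r$ and $\psi = \chi^\nu \times \chi^\mu$, and then substituting the Littlewood--Richardson rule $\left( \chi^\nu \times \chi^\mu \right) \Ind_{S_{n-r} \times S_r}^{S_n} = \sum_\lambda c^\lambda_{\nu,\mu}\,\chi^\lambda$, yields the desired expression for the inner induction, and hence the lemma. (Here $c^\lambda_{\nu,\mu} = 0$ unless $\lambda$ is a partition of $n$.)

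Essentially all of this is bookkeeping: transitivity of induction, the product formula for induction, and the observation that an inflated character is trivial on the base group. The only step carrying genuine content is the inflation--induction commutation formula, and even that is a short exercise with the induced-character formula; I expect the main (mild) obstacle to be keeping the nested subgroup identifications straight, in particular confirming that $G$ really has base group $S_m^n$ and that its image in $S_n$ is the Young subgroup $S_{n-r} \times S_r$. As a cross-check, or as an alternative proof, one can pass to symmetric functions: the left-hand side corresponds under the characteristic map to $(s_\nu \circ s_{(m)})(s_\mu \circ s_{(m)})$, and since the plethysm operation $f \mapsto f \circ s_{(m)}$ is a ring homomorphism this equals $(s_\nu s_\mu) \circ s_{(m)} = \sum_\lambda c^\lambda_{\nu,\mu}\,(s_\lambda \circ s_{(m)})$, which is precisely $\sum_\lambda c^\lambda_{\nu,\mu}\,\phi^{(m^n)}_\lambda$.
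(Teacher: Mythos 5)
Your proof is correct and follows essentially the same route as the paper, which simply cites ``properties of induction'' together with a lemma of Chuang--Tan for the key inflation--induction compatibility in the wreath product, and also records the symmetric-function alternative (plethysm with $s_{(m)}$ being a ring homomorphism) that you give as a cross-check. You have merely written out in full the wreath-product bookkeeping that the paper delegates to its reference, and all the steps check out.
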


\begin{proof}
The proof is straightforward by the properties of induction and~\cite[Lemma~3.3(1)]{ChuangTan}. Alternatively, phrased using symmetric functions, it follows since $f \mapsto f \circ s_{(m)}$ is a ring homomorphism; see for example \cite[Equation~(8.3), I.8]{Macdonald}.
\end{proof}

\section{Explicit Decompositions}
We now record the irreducible constituents of some twisted Foulkes characters.

\begin{theorem}\label{thm:2row}
The character $\phi^{(2^n)}_{(n-r,r)}$ decomposes as
\[\phi^{(2^n)}_{(n-r,r)} = \sum_{\lambda  } \sum_{\alpha, \beta, \gamma, \delta} \left( c^\lambda_{2\alpha, 2\beta} -c^\lambda_{2\gamma,2\delta} \right)\chi^\lambda, \]
where the second sum is over all partitions $\alpha$ of $n-r$, $\beta$ of $r$, $\gamma$ of $n-r+1$ and $\delta$ of $r-1$.
\end{theorem}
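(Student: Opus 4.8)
The plan is to reduce the two-row case to the already-known one-row case~\eqref{eqn:F2} by means of Lemma~\ref{lem:ind} and Pieri's rule. Note first that since $(n-r,r)$ is a partition we have $n-r\ge r$, and the case $r=0$ is exactly~\eqref{eqn:F2}, so assume $r\ge 1$.

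The first step is to record a two-row analogue of Lemma~\ref{lem:ind}. Applying that lemma with $m=2$, $\nu=(n-r)$, $\mu=(r)$, and recalling that, by Pieri's rule, $c^\lambda_{(n-r),(r)}$ equals $1$ if $\lambda=(n-i,i)$ for some $0\le i\le r$ and equals $0$ otherwise, gives
\[ \bigl(\phi^{(2^{n-r})}_{(n-r)}\times\phi^{(2^r)}_{(r)}\bigr)\big\uparrow_{S_{2(n-r)}\times S_{2r}}^{S_{2n}} = \sum_{i=0}^{r}\phi^{(2^n)}_{(n-i,i)}. \]
Applying the lemma instead with $\nu=(n-r+1)$, $\mu=(r-1)$ gives the analogous identity with the sum running over $0\le i\le r-1$. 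Subtracting the two identities, the right-hand sides telescope and only the $i=r$ term survives, so that
\[ \phi^{(2^n)}_{(n-r,r)} = \bigl(\phi^{(2^{n-r})}_{(n-r)}\times\phi^{(2^r)}_{(r)}\bigr)\big\uparrow_{S_{2(n-r)}\times S_{2r}}^{S_{2n}} - \bigl(\phi^{(2^{n-r+1})}_{(n-r+1)}\times\phi^{(2^{r-1})}_{(r-1)}\bigr)\big\uparrow_{S_{2(n-r+1)}\times S_{2(r-1)}}^{S_{2n}}. \]
(Equivalently, in the language of symmetric functions this is the image under the ring homomorphism $f\mapsto f\circ s_{(2)}$ of the identity $s_{(n-r,r)}=s_{(n-r)}s_{(r)}-s_{(n-r+1)}s_{(r-1)}$.)

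Finally I would substitute Thrall's decomposition~\eqref{eqn:F2} into each factor, writing $\phi^{(2^{n-r})}_{(n-r)}=\sum_\alpha\chi^{2\alpha}$ and $\phi^{(2^r)}_{(r)}=\sum_\beta\chi^{2\beta}$ over partitions $\alpha$ of $n-r$ and $\beta$ of $r$, and likewise $\phi^{(2^{n-r+1})}_{(n-r+1)}=\sum_\gamma\chi^{2\gamma}$ and $\phi^{(2^{r-1})}_{(r-1)}=\sum_\delta\chi^{2\delta}$ over $\gamma$ of $n-r+1$ and $\delta$ of $r-1$. Expanding the two induction products by the Littlewood--Richardson rule, $\bigl(\chi^{2\alpha}\times\chi^{2\beta}\bigr)\big\uparrow=\sum_\lambda c^\lambda_{2\alpha,2\beta}\chi^\lambda$ and similarly for the second product, and collecting the coefficient of each $\chi^\lambda$ yields exactly the claimed formula. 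I do not expect a deep obstacle here; the only points genuinely needing care are the Pieri computation of $c^\lambda_{(n-r),(r)}$ and the telescoping of the two ranges, the bookkeeping of which partitions index each of the four sums, and checking consistency in the boundary value $r=0$ (where the second family of summands is empty).
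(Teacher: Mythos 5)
Your proposal is correct and follows essentially the same route as the paper: both apply Lemma~\ref{lem:ind} with Pieri's rule to the pairs $\bigl((n-r),(r)\bigr)$ and $\bigl((n-r+1),(r-1)\bigr)$, subtract to isolate $\phi^{(2^n)}_{(n-r,r)}$, and then substitute Equation~(\ref{eqn:F2}) and expand via the Littlewood--Richardson rule.
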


\begin{proof}
We apply Lemma~\ref{lem:ind} twice  to obtain
\begin{eqnarray*}
\left( \phi_{(n-r)}^{(2^{n-r})} \times \phi_{(r)}^{(2^r)}\right)\Big\uparrow^{S_{2n}}
&=& \phi_{(n-r,r)}^{(2^n)}+ \phi_{(n-r+1,r-1)}^{(2^n)}+\cdots + \phi_{(n-1,1)}^{(2^n)}+ \phi_{(n)}^{(2^n)}\\
&=& \phi_{(n-r,r)}^{(2^{n})} + \left( \phi_{(n-r+1)}^{(2^{n-r+1})} \times \phi_{(r-1)}^{(2^{r-1})}\right) \Big\uparrow^{S_{2n}}.
\end{eqnarray*}
We now apply Equation~(\ref{eqn:F2}) to conclude that
\[\phi_{(n-r,r)}^{(2^{n})} = \Bigg( \sum_{\alpha \vdash n-r} \chi^{2\alpha} \times \sum_{\beta \vdash r} \chi^{2\beta}\Bigg)\Bigg\uparrow^{S_{2n}} - \Bigg( \sum_{\gamma \vdash n-r+1} \chi^{2\gamma} \times \sum_{\delta \vdash r-1} \chi^{2\delta}\Bigg)\Bigg\uparrow^{S_{2n}}. \]
An application of the Littlewood--Richardson rule yields the statement.
\end{proof}

The description of the character $\phi^{(2^n)}_{(n-1,1)}$ is particularly simple. We shall denote the number of distinct parts of a partition $\lambda$ by $a_\lambda$.
\begin{corollary}\label{cor:1}
The decomposition of $\phi^{(2^n)}_{(n-1,1)}$ into its irreducible con\-sti\-tu\-ents is
\[ \phi^{(2^n)}_{(n-1,1)} = \sum_{\gamma \vdash n} (a_{2\gamma}-1) \chi^{2\gamma} + \sum_\mu \chi^\mu, \]
where the second sum is over all partitions $\mu$ of $2n$ 
which have all but two parts even and the two odd parts distinct.
\end{corollary}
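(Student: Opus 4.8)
The plan is to specialise Theorem~\ref{thm:2row} to the case $r=1$. Then $\beta$ is necessarily the partition $(1)$ and $\delta$ the empty partition, so that $2\beta=(2)$, $2\delta=\emptyset$, and $c^\lambda_{2\gamma,2\delta}=1$ exactly when $\lambda=2\gamma$; hence the ``negative'' sum collapses to $\sum_{\gamma\vdash n}\chi^{2\gamma}$. In the ``positive'' sum the coefficient $c^\lambda_{2\alpha,(2)}$ is, by Pieri's rule, equal to $1$ if $\lambda/2\alpha$ is a horizontal strip of two boxes and to $0$ otherwise. Consequently the multiplicity of $\chi^\lambda$ in $\phi^{(2^n)}_{(n-1,1)}$ equals
\[ N(\lambda)-\big[\,\lambda\text{ has all parts even}\,\big], \]
where $N(\lambda)$ is the number of partitions $\alpha$ of $n-1$ for which $\lambda/2\alpha$ is a horizontal $2$-strip; equivalently, $N(\lambda)$ counts the partitions $\mu\subseteq\lambda$ with $|\mu|=|\lambda|-2$, all parts even, and $\lambda/\mu$ a horizontal strip. (One could avoid quoting Theorem~\ref{thm:2row} altogether: since $\phi^{(2)}_{(1)}=\chi^{(2)}$, Lemma~\ref{lem:ind} and Equation~(\ref{eqn:F2}) give $\phi^{(2^n)}_{(n-1,1)}+\phi^{(2^n)}_{(n)}=\big(\sum_{\alpha\vdash n-1}\chi^{2\alpha}\times\chi^{(2)}\big)\!\uparrow^{S_{2n}}$, which one expands by Pieri and from which one then subtracts $\phi^{(2^n)}_{(n)}=\sum_{\gamma\vdash n}\chi^{2\gamma}$.)

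It remains to evaluate $N(\lambda)$ by a short case analysis on the parities of the parts of $\lambda$. To pass from $\lambda$ to such a $\mu$ one removes two boxes, no two in the same column; so one either removes a horizontal domino from a single row $i$, which preserves the parity of every part and is admissible precisely when every part of $\lambda$ is even and $\lambda_i>\lambda_{i+1}$, or else removes one box from each of two distinct rows $i<j$, which forces $\lambda_i$ and $\lambda_j$ to be odd (and all other parts even) and requires the resulting parts to interlace with $\lambda$. This leads to four cases. If every part of $\lambda$ is even, say $\lambda=2\gamma$, then $N(\lambda)$ is the number of indices $i$ with $\lambda_i>\lambda_{i+1}$, namely the number of distinct parts $a_{2\gamma}$, so $\chi^\lambda$ occurs with multiplicity $a_{2\gamma}-1$. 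If $\lambda$ has exactly two odd parts and they are distinct, the two odd rows automatically satisfy the interlacing condition and this is the only admissible choice, so $N(\lambda)=1$ and $\chi^\lambda$ occurs with multiplicity $1$. If $\lambda$ has exactly two odd parts that are equal (hence lying in adjacent rows $i$, $i+1$), the unique candidate $\mu$ has $\mu_i<\lambda_{i+1}$ and so fails to interlace with $\lambda$, giving $N(\lambda)=0$. Finally, if $\lambda$ has exactly one odd part, or at least three odd parts, no admissible $\mu$ exists, so $N(\lambda)=0$. Assembling the four cases yields the stated decomposition.

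The main obstacle, such as it is, lies in the last two sub-cases: one must check that when $\lambda$ has two distinct odd parts the horizontal-strip condition imposes no extra constraint — an odd part is automatically strictly larger than the even part immediately below it, and if the two odd rows happen to be adjacent their (distinct, odd) entries still differ by at least $2$ — whereas two equal odd parts produce a candidate violating interlacing. Some routine care is also needed with the bottom row (where a part may drop to $0$) and with the identification of $\#\{i:\lambda_i>\lambda_{i+1}\}$ with the number $a_{2\gamma}$ of distinct parts, but neither of these is a genuine difficulty.
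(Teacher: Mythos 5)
Your proposal is correct and follows essentially the same route as the paper: specialise Theorem~\ref{thm:2row} to $r=1$, note the negative sum collapses to $\sum_{\gamma\vdash n}\chi^{2\gamma}$, and evaluate $c^\lambda_{2\alpha,(2)}$ by Pieri's rule, with the case analysis on parities (the paper states the conclusion of that analysis more tersely, while you verify the interlacing conditions explicitly). No discrepancies.
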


\begin{proof}
The Littlewood--Richardson coefficient $c^\lambda_{2\alpha,(2)}$ is one precisely if we obtain $\lambda$ from $2\alpha$ by adding two boxes that do not lie in the same column, and otherwise it is zero. Constituents are therefore labelled either by even partitions or by partitions with precisely two odd parts that are distinct. The latter appear with multiplicity one. In the former case, the multiplicity is as stated above since the positions of the added boxes must be at the end of the first row of its length in $2\alpha$.
\end{proof}

The description of $\phi^{(2^n)}_\nu$ in the case where $\nu$ has two columns is obtained similarly.

\begin{theorem}\label{thm:2col}
The character $\phi^{(2^n)}_{(2^r,1^{n-2r})}$ decomposes as
\[\phi^{(2^n)}_{(2^r,1^{n-2r})}=\sum_{\lambda}\sum_{\alpha,\beta,\gamma,\delta}\left( c^\lambda_{2[\alpha],2[\beta]}-c^\lambda_{2[\gamma],2[\delta]}\right)\chi^\lambda,\]
where the second sum is over all partitions with no repeated parts $\alpha$ of $n-r$, $\beta$ of $r$, $\gamma$ of $n-r+1$ and $\delta$ of $r-1$.
\end{theorem}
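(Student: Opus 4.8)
The plan is to follow the proof of Theorem~\ref{thm:2row} line for line, replacing the one-row partitions appearing there by one-column partitions, and replacing Equation~(\ref{eqn:F2}) by Equation~(\ref{eqn:Fsign2}). The only new ingredient is the Pieri-type expansion for products of the form $s_{(1^a)} s_{(1^b)}$, namely the dual Pieri rule
\[ s_{(1^{a})}\, s_{(1^{b})} = \sum_{k=0}^{\min(a,b)} s_{(2^k,\,1^{\,a+b-2k})}, \qquad\text{equivalently}\qquad \bigl( \chi^{(1^a)} \times \chi^{(1^b)} \bigr)\Big\uparrow_{S_a\times S_b}^{S_{a+b}} = \sum_{k=0}^{\min(a,b)} \chi^{(2^k,\,1^{\,a+b-2k})} . \]

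First I would combine this with Lemma~\ref{lem:ind} (applied with $m=2$) to get, for $0\le r\le n/2$,
\[ \bigl( \phi_{(1^{n-r})}^{(2^{n-r})} \times \phi_{(1^{r})}^{(2^{r})} \bigr)\Big\uparrow^{S_{2n}} = \sum_{k=0}^{r} \phi_{(2^k,1^{n-2k})}^{(2^n)}, \qquad \bigl( \phi_{(1^{n-r+1})}^{(2^{n-r+1})} \times \phi_{(1^{r-1})}^{(2^{r-1})} \bigr)\Big\uparrow^{S_{2n}} = \sum_{k=0}^{r-1} \phi_{(2^k,1^{n-2k})}^{(2^n)} , \]
where for the second identity one uses $\min(n-r+1,r-1)=r-1$, valid since $n\ge 2r$. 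Subtracting these telescopes to leave exactly $\phi_{(2^r,1^{n-2r})}^{(2^n)}$. I would then substitute Equation~(\ref{eqn:Fsign2}), writing each $\phi^{(2^m)}_{(1^m)}=\sum_\alpha \chi^{2[\alpha]}$ (sum over partitions $\alpha$ of $m$ with distinct parts), to obtain
\[ \phi_{(2^r,1^{n-2r})}^{(2^n)} = \Bigl( \sum_{\alpha}\chi^{2[\alpha]} \times \sum_{\beta}\chi^{2[\beta]} \Bigr)\Big\uparrow^{S_{2n}} - \Bigl( \sum_{\gamma}\chi^{2[\gamma]} \times \sum_{\delta}\chi^{2[\delta]} \Bigr)\Big\uparrow^{S_{2n}}, \]
with $\alpha,\beta,\gamma,\delta$ running over the distinct-part partitions of $n-r$, $r$, $n-r+1$, $r-1$ respectively, and then expand each induced product by the Littlewood--Richardson rule to read off the stated formula.

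I do not expect a genuine obstacle here, since the argument is formally parallel to Theorem~\ref{thm:2row}; the text's phrase ``obtained similarly'' is accurate. The only points requiring a little care are the verification of the dual Pieri expansion and the check that the two sums above telescope exactly — this uses $n-2r\ge 0$, which holds because $(2^r,1^{n-2r})$ is assumed to be a partition — together with the degenerate case $r=0$, in which the subtracted term is an empty sum (there is no partition of $-1$) and the statement reduces to Equation~(\ref{eqn:Fsign2}). One could alternatively derive the theorem from Theorem~\ref{thm:2row} by applying the involution $\omega$ on symmetric functions and conjugating partitions, but the direct approach above is cleaner and keeps the note self-contained.
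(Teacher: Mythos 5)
Your proposal is correct and takes essentially the same route as the paper's own proof: the paper likewise applies Lemma~\ref{lem:ind} to $\phi^{(2^{n-r})}_{(1^{n-r})}\times\phi^{(2^r)}_{(1^r)}$ to get $\sum_{j=0}^r\phi^{(2^n)}_{(2^j,1^{n-2j})}$, subtracts the analogous expression for $r-1$, and finishes with Equation~(\ref{eqn:Fsign2}) and the Littlewood--Richardson rule. The explicit dual Pieri expansion and the check that $\min(n-r+1,r-1)=r-1$ are left implicit in the paper but are exactly the right details to verify.
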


\begin{proof}
By Lemma~\ref{lem:ind} we see that 
\[\left(\phi^{(2^{n-r})}_{(1^{n-r})}\times\phi_{(1^r)}^{(2^r)}\right)\Big\uparrow_{S_{2(n-r)}\times S_{2r}}^{S_{2n}}=\sum_{j=0}^r\phi_{(2^j,1^{n-2j})}^{(2^{n})}\]
and therefore
\begin{multline} \phi^{(2^{n})}_{(2^r,1^{n-2r})} = \left( \phi^{(2^{n-r})}_{(1^{n-r})} \times \phi_{(1^r)}^{(2^r)}\right)\Big\uparrow_{S_{2(n-r)}\times S_{2r}}^{S_{2n}}\\ - \left( \phi^{(2^{n-r+1})}_{(1^{n-r+1})} \times \phi_{(1^{r-1})}^{(2^{r-1})}\right) \Big\uparrow_{S_{2(n-r+1)}\times S_{2(r-1)}}^{S_{2n}}.\end{multline}
The result is obtained using Equation~(\ref{eqn:Fsign2}) and the Littlewood--Richardson rule.
\end{proof}

The simple decomposition of $\phi^{(2^n)}_{(2,1^{n-2})}$ that we obtain is recorded below. We require one additional piece of notation: for a partition $\gamma$ let\linebreak $b_\gamma =\vert\{i:\gamma_i>\gamma_{i+1}+1\}\vert$.

\begin{corollary} \label{cor:2}
The decomposition of $\phi^{(2^n)}_{(2,1^{n-2})}$  into its irreducible con\-sti\-tu\-ents is
\[ \phi^{(2^n)}_{(2,1^{n-2})} = \sum_{\gamma } (b_\gamma-1) \chi^{2[\gamma]} + \sum_\mu \chi^\mu,
 \]
where the first sum is over all partitions $\gamma$ of $n$ with distinct parts, and the second sum is over all partitions $\mu$ of $2n$ which are obtained from adding two nodes to a partition of the form $2[\alpha]$ such that the two nodes do not lie in the same column and do not lie at opposite ends of any leading diagonal hook.
\end{corollary}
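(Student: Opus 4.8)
The plan is to feed $r=1$ into Theorem~\ref{thm:2col} and then evaluate the resulting Littlewood--Richardson coefficients by hand. The only partition of $1$ with distinct parts is $(1)$, and straight from the definition $2[(1)]=(2)$; likewise the only partition of $0$ is $\emptyset$, with $2[\emptyset]=\emptyset$. Hence Theorem~\ref{thm:2col} collapses to
\[
\phi^{(2^n)}_{(2,1^{n-2})}=\sum_\lambda N(\lambda)\,\chi^\lambda-\sum_{\gamma}\chi^{2[\gamma]},
\]
where the second sum runs over partitions $\gamma$ of $n$ with distinct parts and, by the Pieri rule, $N(\lambda)$ is the number of partitions $\alpha$ of $n-1$ with distinct parts such that $\lambda$ is obtained from $2[\alpha]$ by adjoining two boxes lying in distinct columns. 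Everything then reduces to understanding $N(\lambda)$, and the difference in the display is responsible for the two sums in the statement.

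I would next split the possible $\lambda$ according to whether or not $\lambda=2[\gamma]$ for some partition $\gamma$ of $n$ with distinct parts. In Frobenius coordinates $2[\gamma]=(\gamma_1,\dots,\gamma_k\mid\gamma_1-1,\dots,\gamma_k-1)$, so these are precisely the partitions each of whose leading diagonal hooks has arm length exactly one more than its leg length. Adjoining a horizontal $2$-strip to $2[\alpha]$ can affect the Frobenius data at only one index, or else introduce a new index (exactly when the two boxes form a diagonal domino); tracking this shows that the result is again of the form $2[\gamma]$ precisely when the two new boxes are the extreme cells of a single leading diagonal hook of $\lambda$ --- which, apart from the diagonal-domino case, is exactly the configuration forbidden in the definition of the partitions $\mu$. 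The same bookkeeping gives that when $\lambda$ is \emph{not} of the form $2[\gamma]$ there is at most one admissible $\alpha$, so $N(\lambda)\in\{0,1\}$; comparing with the display, such a $\chi^\lambda$ occurs in $\phi^{(2^n)}_{(2,1^{n-2})}$ with multiplicity $1$ exactly when $\lambda$ lies in the $\mu$-family described in the statement.

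It remains to compute the multiplicity of $\chi^{2[\gamma]}$. Here $N(2[\gamma])$ counts the partitions $\alpha$ obtained from $\gamma$ by lowering a single part by $1$ and still having distinct parts --- equivalently the removable boxes of the shifted diagram of $\gamma$ --- and an elementary count identifies this number with $b_\gamma$, together with one further contribution exactly when $\gamma$ has a part equal to $1$, the move deleting this part being the same one that exhibits $2[\gamma]$ as a member of the $\mu$-family. Subtracting the single $\chi^{2[\gamma]}$ coming from $-\sum_\gamma\chi^{2[\gamma]}$ leaves multiplicity $b_\gamma-1$ in the first sum of the statement, with the possible extra $+1$ absorbed into the second sum, so the Pieri/Littlewood--Richardson bookkeeping assembles into the claimed identity.

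The step I expect to be the main obstacle is the uniqueness assertion of the second paragraph: that a partition not of the form $2[\gamma]$ arises from at most one $2[\alpha]$ by adjoining a horizontal $2$-strip. This requires a careful, though elementary, case analysis --- most cleanly done in Frobenius coordinates, according to whether the two added boxes lie in the same row and how they meet the main diagonal --- and it is also the point where one must pin down exactly what ``opposite ends of a leading diagonal hook'' should mean for the short (length one or two) hooks so that it dovetails with the formula for $b_\gamma$.
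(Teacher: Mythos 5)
Your proposal is correct and follows the same route the paper intends: Corollary~\ref{cor:2} is stated without a separate proof precisely because it is the $r=1$ specialisation of Theorem~\ref{thm:2col}, with the Littlewood--Richardson coefficients $c^\lambda_{2[\alpha],(2)}$ evaluated by the Pieri rule and the resulting case analysis carried out in Frobenius coordinates, exactly as you do. Your bookkeeping checks out --- including the uniqueness claim $N(\lambda)\le 1$ for $\lambda$ not of the form $2[\gamma]$, the identity $N(2[\gamma])=b_\gamma+[\,1\in\{\gamma\}\,]$, and your (correct) observation that one must read ``opposite ends of a leading diagonal hook'' so as to exclude the degenerate length-two hook, since otherwise the $\gamma_k=1$ contribution is miscounted.
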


Finally, we turn our attention to $\phi^{(2^n)}_\nu$ in the case where $\nu$ is a hook partition.

\begin{theorem}\label{thm:hook}
The following two formulas decompose   $\phi^{(2^n)}_{(n-r,1^r)}$  into its irreducible constituents:
\begin{align*} \phi^{(2^n)}_{(n-r,1^r)} &= \sum_\lambda  \left(  \sum_{j=0}^r (-1)^j \sum_{\alpha^{(j)}, \beta^{(j)}} c^\lambda_{2\alpha^{(j)},\,2[\beta^{(j)}]}  \right) \chi^\lambda,
\intertext{where the third summation is over all partitions $\alpha^{(j)}$ of $n-r+j$ and over all partitions $\beta^{(j)}$ of $r-j$ with distinct parts;}
 \phi^{(2^n)}_{(n-r,1^r)} &= \sum_\lambda \left(\sum_{j=1}^{n-r}(-1)^{j-1} \sum_{\gamma^{(j)},\,\delta^{(j)}} c^\lambda_{2\gamma^{(j)},\,2[\delta^{(j)}]} \right) \chi^\lambda,
\end{align*}
where the third summation is over all partitions $\gamma^{(j)}$ of $n-r-j$ and over all partitions $\delta^{(j)}$ of $r+j$ with distinct parts.
\end{theorem}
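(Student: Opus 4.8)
The plan is to mimic the strategy used in the proofs of Theorems~\ref{thm:2row} and~\ref{thm:2col}, but now applied to a hook partition $(n-r,1^r)$ rather than a two-row or two-column shape. The key combinatorial input is the classical branching-type identity for hook characters: restricting (or rather, expressing) $\chi^{(n-r,1^r)}$ in terms of characters of a one-row and a one-column shape. Concretely, one has the well-known telescoping relation
\[
\chi^{(n-r)} \times \chi^{(1^r)} \Big\uparrow_{S_{n-r}\times S_r}^{S_n} = \chi^{(n-r,1^r)} + \chi^{(n-r+1,1^{r-1})},
\]
which comes from the Littlewood--Richardson (here Pieri) rule: inducing a one-row shape against a one-column shape produces exactly the two hooks obtained by attaching the column of length $r$ either below the first box of the row, or to the right of it creating a longer arm. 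Iterating this identity yields an alternating-sum expression for $\chi^{(n-r,1^r)}$ in terms of induced products $\bigl(\chi^{(n-r+j)}\times\chi^{(1^{r-j})}\bigr)\!\!\uparrow$ for $j=0,1,\dots,r$, with signs $(-1)^j$. Running the telescoping in the other direction — attaching the row to the column instead — gives the dual alternating sum in terms of $\bigl(\chi^{(n-r-j)}\times\chi^{(1^{r+j})}\bigr)\!\!\uparrow$ for $j=1,\dots,n-r$, with signs $(-1)^{j-1}$. These two expressions account for the two formulas in the statement.

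First I would establish (or simply quote) the telescoping identity above and solve it recursively for $\chi^{(n-r,1^r)}$, obtaining the two alternating sums; this is pure symmetric-function bookkeeping and is where the signs $(-1)^j$ and the ranges of $j$ originate. Next I would inflate to $S_2\wr S_n$ and induce up to $S_{2n}$: since inflation followed by the Foulkes construction is additive and, by Lemma~\ref{lem:ind}, compatible with induction products, each term $\bigl(\chi^{(n-r+j)}\times\chi^{(1^{r-j})}\bigr)\Ind_{S_{n-r+j}\times S_{r-j}}^{S_n}$ transforms into $\bigl(\phi^{(2^{n-r+j})}_{(n-r+j)} \times \phi^{(2^{r-j})}_{(1^{r-j})}\bigr)\Ind_{S_{2(n-r+j)}\times S_{2(r-j)}}^{S_{2n}}$. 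Then I would substitute the two known decompositions~(\ref{eqn:F2}) and~(\ref{eqn:Fsign2}): $\phi^{(2^{n-r+j})}_{(n-r+j)} = \sum_{\alpha\vdash n-r+j}\chi^{2\alpha}$ and $\phi^{(2^{r-j})}_{(1^{r-j})} = \sum_{\beta\vdash r-j,\ \beta\ \text{distinct}}\chi^{2[\beta]}$. A final application of the Littlewood--Richardson rule to each induced product $\chi^{2\alpha}\times\chi^{2[\beta]}\!\!\uparrow = \sum_\lambda c^\lambda_{2\alpha,2[\beta]}\chi^\lambda$ collects everything into the stated double-indexed sum over $\lambda$ and over $(\alpha^{(j)},\beta^{(j)})$. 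The second formula follows identically from the dual telescoping, using $\phi^{(2^{n-r-j})}_{(n-r-j)} = \sum\chi^{2\gamma}$ and $\phi^{(2^{r+j})}_{(1^{r+j})} = \sum\chi^{2[\delta]}$.

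The main obstacle I expect is twofold. First, one must verify carefully that the alternating sum for $\chi^{(n-r,1^r)}$ terminates correctly at the claimed endpoints — at $j=r$ the inner character $\chi^{(1^0)}$ is trivial and the product becomes just $\chi^{(n)}$, while for the dual sum at $j=n-r$ the row shrinks to nothing; checking the boundary behaviour and that no spurious terms survive requires attention, though it is routine induction on $r$ (respectively on $n-r$). Second, and more delicately, since these are genuine alternating sums of characters rather than sums with nonnegative coefficients, the coefficient $\sum_j(-1)^j\sum_{\alpha^{(j)},\beta^{(j)}}c^\lambda_{2\alpha^{(j)},2[\beta^{(j)}]}$ of a given $\chi^\lambda$ is only guaranteed to be nonnegative a posteriori (because the left-hand side is an honest character); I would remark on this, noting that the two formulas give the same nonnegative answer and that the claim "decompose into irreducible constituents" is justified precisely because $\phi^{(2^n)}_{(n-r,1^r)}$ is a character. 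No genuinely hard estimate is involved beyond these verifications.
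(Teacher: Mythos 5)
Your proposal is correct and is essentially the paper's own argument: both rest on the Pieri-rule identity giving the two-term sum of hooks, telescoped with alternating signs in the two directions, followed by Lemma~\ref{lem:ind}, Equations~(\ref{eqn:F2}) and~(\ref{eqn:Fsign2}), and the Littlewood--Richardson rule. The only (immaterial) difference is that you telescope at the level of the $S_n$-characters $\chi^{(n-r,1^r)}$ and then apply the twisted Foulkes construction, whereas the paper applies Lemma~\ref{lem:ind} first and telescopes directly among the characters $\phi^{(2^n)}_{(n-r+j,1^{r-j})}$ of $S_{2n}$.
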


\begin{proof}
 Lemma~\ref{lem:ind} tells us that 
\[\left(\phi^{(2^{n-r})}_{(n-r)}\times\phi_{(1^r)}^{(2^r)}\right)\Big\uparrow_{S_{2(n-r)}\times S_{2r}}^{S_{2n}}=\phi_{(n-r,1^{r})}^{(2^{n})}+\phi_{(n-r+1,1^{{r-1}})}^{(2^{n})},\]
and hence
\[\phi^{(2^n)}_{(n-r,1^r)}=\sum_\lambda\left(\sum_{\alpha^{(0)},\,\beta^{(0)}}c^\lambda_{2\alpha^{(0)},\,2[\beta^{(0)}]} \right)\chi^\lambda-\phi^{(2^n)}_{(n-r+1,1^{r-1})}.\]
Repeatedly using this relation, along with Equations~(\ref{eqn:F2}) and~(\ref{eqn:Fsign2}) and the Littlewood--Richardson rule, gives the first statement. The second statement follows in exactly the same way starting instead from
\[ \phi^{(2^n)}_{(n-r,1^r)}=\left(\phi^{(2^{n-r-1})}_{(n-r-1)}\times\phi_{(1^{r+1})}^{(2^{r+1})}\right)\Big\uparrow_{S_{2(n-r-1)}\times S_{2(r+1)}}^{S_{2n}}-\phi^{(2^n)}_{(n-r-1,1^{r+1})}.\qedhere \] 
\end{proof}

Other statements similar to Corollaries~\ref{cor:1} and~\ref{cor:2} can be obtained by some careful analysis of the Littlewood--Richardson coefficients. For example, Table~\ref{table1} below shows the constituents of  $\phi^{(2^n)}_{(n-2,1^2)}$ and $\phi^{(2^n)}_{(n-2,2)}$; here we set $\{\lambda\}=\{\lambda_i:\lambda_i>0\}$ and define $r_\lambda:=\vert\{k: k\text{ is a repeated part of }\lambda\}\vert$, and for $X,Y\subseteq\mathbb{N}$, $$N_\lambda(X|Y):=\{k\geq0: 2k+x\in\{\lambda\}\,\forall\, x\in X,\,2k+y\notin\{\lambda\}\,\forall\, y\in Y\}.$$
Recall that $a_\lambda$ denotes the number of distinct parts of a partition $\lambda$.
\medskip

\begin{table}\caption{The multiplicities of the constituents of $\phi^{(2^n)}_{(n-2,1^2)}$ and $\phi^{(2^n)}_{(n-2,2)}$.}\label{table1}

\begin{tabular}{@{}m{3.5cm}cc@{}} \toprule
\centering $\lambda$ & \multicolumn{2}{c}{Multiplicity of $\chi^\lambda$ as a constituent of $\phi^{(2^n)}_\nu$}  \\ \cmidrule(r){2-3}
&$\nu=(n-2,1^2)$&$\nu=(n-2,2)$\\
\midrule
$\lambda$ has all parts even & $\binom{a_\lambda}{2}-a_\lambda+1$ & $a_\lambda(a_\lambda-2)+N_\lambda(4|2)+r_\lambda$\\ \addlinespace
$\lambda$ has 2 odd parts that are distinct, and all other parts even & \parbox{3.5cm}{\centering $N_\lambda(3|2)+2N_\lambda(2|1)$ $+N_\lambda(1,2|\emptyset)-1$} & \parbox{3.5cm}{\centering $2N_\lambda(2|1) + N_\lambda(1,2|\emptyset)$  $+ N_\lambda(3|1,2)-1$}\\ \addlinespace 
$\lambda$ has 2 equal odd parts and all other parts even & $N_\lambda(3|2)+N_\lambda(2|1)$ & 0 \\ \addlinespace
$\lambda$ has 4 odd parts that are distinct and all other parts even & 3 & 3\\ \addlinespace
$\lambda$ has 4 odd parts, one repeated and two distinct, and all other parts even & 1 & 1 \\ \addlinespace
$\lambda$ has 4 odd parts, forming two pairs of equal odd parts, and all other parts even & 0 & 1 \\ \addlinespace
$\lambda$ not of the above form & 0 & 0\\
\bottomrule
\end{tabular}
\end{table}

Our results decompose the plethysms  $s_\nu \circ s_{(2)}$ when $\nu$ has two rows or two columns or is a hook partition. We remark that  by applying the $\omega$ involution (see \cite[Ch.~I, Equation~(2.7)]{Macdonald})  the plethysms $s_\nu \circ s_{(1^2)}$ (and the corresponding characters of $S_{2n}$) are also determined since 
\cite[Ch.~I, Equation~(3.8) and \S 8, Example 1(a)]{Macdonald} tell us that $\omega(s_\nu \circ s_{(2)}) = s_{\nu} \circ s_{(1^2)}$ and $\omega(s_\lambda)=s_{\lambda'}$.

\section*{Acknowledgments}
The first author gratefully acknowledges the financial support provided by the School of Mathematics, Statistics and Actuarial Science at University of Kent, and the Engineering and Physical Sciences Research Council (grant
number EP/P505577/1).

\end{document}